\theoremstyle{plain} 
\newtheorem{theorem}{Theorem}[section]
\newtheorem{lemma}[theorem]{Lemma}
\newtheorem{proposition}[theorem]{Proposition}
\newtheorem{corollary}[theorem]{Corollary}
\theoremstyle{definition} 
\newtheorem{remark}[theorem]{Remark}
\newtheorem*{ackn}{Acknowledgment}
\newcommand{\skal}[2]{\langle #1,#2\rangle}
\begin{document}
\baselineskip=17pt

\title{On extremal function of a modulus of a foliation}

\author{M. Ciska}

\address{
Department of Mathematics and Computer Science \endgraf
University of \L\'{o}d\'{z} \endgraf
ul. Banacha 22, 90-238 \L\'{o}d\'{z} \endgraf
Poland
}
\email{mciska@math.uni.lodz.pl}

\date{}

\begin{abstract}
We investigate the properties of a modulus of a foliation on a Riemannian manifold. We give necessary and sufficient conditions for the existence of an extremal function and state some of its properties. We obtain the integral formula which, in a sense, combines the integral over the manifold with integral over the leaves. We state the relation between an extremal function and the geometry of distribution orthogonal to a foliation.  
\end{abstract}
\keywords{modulus; extremal function; foliation}
\subjclass[2000]{53C12; 58C35}

\maketitle

\section{Introduction}

In 1950 Ahlfors and Beurling \cite{ab} introduced a conformal invariant called extremal length of a family of curves in a plane. Its inverse, called the modulus, plays major role in the theory of quasiconformal maps. Modulus was generalized to family of surfaces and submanifolds \cite{bf,kp}. We consider a $p$--modulus of a foliation on a Riemannian manifold. 

The majority of results in this paper is obtained under the assumption of existence of a an extremal function of modulus i.e. a function which realizes the modulus. We give necessary and sufficient conditions for existence of this function (Theorem \ref{extt1}). In particular we consider foliations given by the level sets of submersion (Corollary \ref{extc2}). Moreover, we state some properties of extremal function.

Existence of extremal function allows to define a function 
\begin{equation*}
\hat\varphi(x)=\int_{L_x}f\,d\mu_{L_x},\quad x\in M,
\end{equation*}
for any $\varphi\in L^p(M)$. The main result is the following integral formula (Theorem \ref{intfor})
\begin{equation*}
\int_M f_0^{p-1}\varphi\,d\mu_M=\int_M f_0^p\hat\varphi\,d\mu_M,
\end{equation*}
where $f_0$ is the extremal function for $p$--modulus and $\varphi\in L^p(M)$ is such that ${\rm esssup}|\varphi|<\infty$ and ${\rm esssup}|\hat\varphi|<\infty$. Using this formula we obtain some results concerning the geometry of a foliation. Namely, the tangent gradient of extremal function is related to the mean curvature of distribution orthogonal to $\mathcal{F}$ and there exists a $\mathcal{F}$--harmonic measure on $M$ under the assumption of compactness of a manifold and the leaves.

In the last section we give some examples. We consider a foliation by circles on a torus in $\mathbb{R}^3$, a foliation given by a distance function and a foliation by spheres in a ring in $\mathbb{R}^n$.

\section{Modulus of a foliation}

Let $(M,g)$ be a Riemannian manifold, $\mathcal{F}$ a foliation on $M$. Denote by $\mu_M$ and $\mu_L$ the Lebesgue measure on $M$ and $L\in\mathcal{F}$, respectively. Let $L^p(M)$ denotes the space of all measurable and $p$--integrable functions on $M$ (with the norm $\|\cdot\|_p$). 

\begin{lemma}\label{measurezero}
Let $A\subset M$ be a measurable set. Then, $\mu_M(A)=0$ if and only if 
\begin{equation*}
\mu_M(\{x\in M:\, \mu_{L_x}(A\cap L_x)>0\})=0.
\end{equation*}
In particular, if $f\in L^p(M)$ is nonnegative, then the integral $\int_L f\,d\mu_L$ exists for almost every leaf $L\in\mathcal{F}$.
\end{lemma}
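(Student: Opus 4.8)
The plan is to localize the statement to a single foliation chart and reduce it to Fubini's theorem. I would fix a countable atlas of foliation charts $\chi_i\colon W_i\to P_i\times T_i$, $i\in\mathbb{N}$, with $P_i$ connected, $\overline{W_i}$ compact, and plaques $Q_{i,t}:=\chi_i^{-1}(P_i\times\{t\})$, $t\in T_i$. In foliated coordinates the volume satisfies $d\mu_M=\theta_i\,d\mu_{Q_{i,t}}\,d\nu_i(t)$ on $W_i$, with $\theta_i$ continuous and bounded away from $0$ and $\infty$ and $\nu_i$ the transversal volume (a standard computation with the volume form). From this I extract two facts: for measurable $E$, one has $\mu_M(E\cap W_i)=0$ iff $\mu_{Q_{i,t}}(E\cap Q_{i,t})=0$ for $\nu_i$-a.e.\ $t$; and Fubini's theorem applies in $W_i$. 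I would also use three standard facts about foliations: the function $t\mapsto\mu_{Q_{i,t}}(E\cap Q_{i,t})$ is $\nu_i$-measurable; a leaf is second countable, hence covered by countably many plaques of the atlas; and the holonomy transition maps, being $C^1$ diffeomorphisms, carry Lebesgue-null sets to Lebesgue-null sets in both directions.

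Put $B=\{x\in M:\mu_{L_x}(A\cap L_x)>0\}$, a union of leaves. For the direction $\mu_M(B)=0\Rightarrow\mu_M(A)=0$, it suffices to show $\mu_M((A\setminus B)\cap W_i)=0$ for each $i$. If $x=\chi_i^{-1}(p,t)\in A\setminus B$, then $Q_{i,t}\subseteq L_x$, so $\mu_{Q_{i,t}}(A\cap Q_{i,t})\le\mu_{L_x}(A\cap L_x)=0$; thus every plaque of $W_i$ meeting $A\setminus B$ meets $A$ in leaf-measure zero, and Fubini in $W_i$ gives $\mu_M((A\setminus B)\cap W_i)=0$. Summing over $i$ and using $\mu_M(B)=0$ yields $\mu_M(A)=0$.

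The substantive half is $\mu_M(A)=0\Rightarrow\mu_M(B)=0$, and this is where I expect the main obstacle. By the local disintegration, each transversal set $T_i^{\mathrm{bad}}:=\{t\in T_i:\mu_{Q_{i,t}}(A\cap Q_{i,t})>0\}$ is $\nu_i$-null. Because a leaf is covered by countably many plaques, $\mu_L(A\cap L)>0$ holds exactly when $L$ contains a plaque $Q_{j,s}$ with $s\in T_j^{\mathrm{bad}}$; equivalently, $B$ is the union of all leaves meeting $\bigcup_j\chi_j^{-1}(P_j\times T_j^{\mathrm{bad}})$. Fix $i$. Since $B$ is a union of leaves and all points of $W_i$ over a given $t$ lie in one leaf, $B\cap W_i=\chi_i^{-1}(P_i\times T_i')$ for some $T_i'\subseteq T_i$, namely the set of $t$ for which the leaf through $\chi_i^{-1}(p,t)$ contains a bad plaque $Q_{j,s}$, $s\in T_j^{\mathrm{bad}}$. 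Joining $Q_{i,t}$ to such a $Q_{j,s}$ by a chain of overlapping plaques and composing the associated holonomy transition maps, I would identify $T_i'=\bigcup_j\bigcup_\gamma h_\gamma^{-1}(T_j^{\mathrm{bad}})$, the union over all chart indices $j$ and over the countably many plaque-chains $\gamma$ from $W_i$ to $W_j$, where $h_\gamma$ is the corresponding holonomy diffeomorphism between open subsets of $T_i$ and $T_j$. As $h_\gamma$ is a $C^1$ diffeomorphism and $T_j^{\mathrm{bad}}$ is $\nu_j$-null, each $h_\gamma^{-1}(T_j^{\mathrm{bad}})$ is $\nu_i$-null, so $T_i'$ is $\nu_i$-null, whence $\mu_M(B\cap W_i)=0$; summing over $i$ gives $\mu_M(B)=0$. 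The delicate point is precisely this transfer of ``transverse nullity'' from one transversal to another along leaves, resting on the absolute continuity of holonomy together with the countable plaque-chain bookkeeping (and second countability of leaves); everything else is Fubini chart by chart.

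For the last assertion, given nonnegative $f\in L^p(M)$ I would fix a nonnegative Borel function $g$ with $f=g$ $\mu_M$-a.e.; then $\{f\neq g\}$ is $\mu_M$-null, so by the first part $\mu_{L_x}(\{f\neq g\}\cap L_x)=0$ for $\mu_M$-a.e.\ $x$, and for such $x$ the restriction $f|_{L_x}$ coincides $\mu_{L_x}$-a.e.\ with the Borel function $g|_{L_x}$. Hence $f|_{L_x}$ is $\mu_{L_x}$-measurable and $\int_{L_x}f\,d\mu_{L_x}=\int_{L_x}g\,d\mu_{L_x}$ is a well-defined element of $[0,+\infty]$.
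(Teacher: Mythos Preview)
Your argument is correct and follows precisely the route the paper gestures at---Fubini's theorem combined with the local product structure of a foliation chart---but the paper's own proof is a single sentence (``Follows by Fubini theorem and the fact that foliation is locally a product $(0,1)^k\times(0,1)^{n-k}$''), whereas you have supplied the details it omits. In particular, your holonomy argument for the implication $\mu_M(A)=0\Rightarrow\mu_M(B)=0$ (propagating transverse nullity along countably many plaque-chains) is exactly the point a one-line appeal to Fubini leaves implicit, since $B$ is defined via the full leaf $L_x$ rather than the local plaque; your treatment is more careful than the paper's here, not different in spirit.
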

\begin{proof}
Follows by Fubini theorem and the fact that foliation is locally a product $(0,1)^k\times(0,1)^{n-k}$, where $k=\dim \mathcal{F}$, $n=\dim M$.
\end{proof}

In the space $L^p(M)$ consider a family ${\rm adm}_p(\mathcal{F})$ of all nonnegative functions $f$ such that $\int_L f\,d\mu_L\geq 1$ for almost every $L\in\mathcal{F}$. Functions belonging to this family are called {\it admissible}. The $p$--{\it modulus} of $\mathcal{F}$ is defined as follows
\begin{equation*}
{\rm mod}_p(\mathcal{F})=\inf_{f\in{\rm adm}_p(\mathcal{F})}\|f\|_p
\end{equation*}
if ${\rm adm}_p(\mathcal{F})\neq\emptyset$ and ${\rm mod}_p(\mathcal{F})=\infty$ otherwise.

We say that $f_0\in{\rm adm}_p(\mathcal{F})$ is {\it extremal} for $p$--modulus of a foliation $\mathcal{F}$ if ${\rm mod}_p(\mathcal{F})=\|f_0\|_p$. 

\begin{proposition}\cite{bf,bl1}\label{propmod}
The modulus has the following properties.
\begin{enumerate}
\item If $\mathcal{L}\subset\mathcal{F}$ and $\bigcup\mathcal{L}$ is measurable, then ${\rm mod}_p(\mathcal{L})\leq{\rm mod}_p(\mathcal{F})$.
\item If $\mathcal{F}=\bigcup_i\mathcal{L}_i$ and $\bigcup\mathcal{L}_i$ is measurable for all $i$, then ${\rm mod}_p(\mathcal{F})^p\leq\sum_i{\rm mod}_p(\mathcal{L}_i)^p$.
\item ${\rm mod}_p(\mathcal{F})=0$ if and only if there is $f\in{\rm adm}_p(\mathcal{F})$ such that $\int_L f\,d\mu_L=\infty$ for almost every $L\in\mathcal{F}$.
\item If $f\in L^p(M)$, then there is a subfamily $\mathcal{L}$ such that ${\rm mod}_p(\mathcal{L})=0$ and $f\in L^1(L)$ for almost every $L\in\mathcal{F}\setminus\mathcal{L}$.
\item If $f_n\to f$ in $L^p(M)$, then there is a subsequence $(f_{n_i})$ of $(f_n)$ and a subfamily $\mathcal{L}\subset\mathcal{F}$ such that ${\rm mod}_p(\mathcal{L})=0$ and $f_{n_i}\to f$ in $L^1(L)$ for almost every $L\in\mathcal{F}\setminus\mathcal{L}$. 
\end{enumerate}
\end{proposition}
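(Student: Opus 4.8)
The plan is to prove the five items in the stated order, as the standard properties of $p$--modulus in the foliated setting. Throughout, ``for almost every $L\in\mathcal{F}$'' means ``for all $L$ outside a subfamily whose union is $\mu_M$--null'', and I will use three elementary facts: a countable union of such null families is again null; a subfamily $\mathcal{L}_0$ whose union is $\mu_M$--null has ${\rm mod}_p(\mathcal{L}_0)=0$ --- indeed, by Lemma~\ref{measurezero} applied to $A=\bigcup\mathcal{L}_0$ and the fact that $\mu_L(L)>0$ for every $L\in\mathcal{F}$, the set $\{x\in M:L_x\in\mathcal{L}_0\}$ is $\mu_M$--null, so the zero function is admissible for $\mathcal{L}_0$; and in a foliated chart $(0,1)^k\times(0,1)^{n-k}$ Tonelli's theorem applies, which in particular makes $x\mapsto\int_{L_x}|f|\,d\mu_{L_x}$ measurable, so the subfamilies $\mathcal{L}$ constructed below have measurable union. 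Item~(1) is then immediate: since $\mathcal{L}\subset\mathcal{F}$, the condition defining ${\rm adm}_p(\mathcal{F})$ a fortiori implies the one defining ${\rm adm}_p(\mathcal{L})$, so ${\rm adm}_p(\mathcal{F})\subset{\rm adm}_p(\mathcal{L})$, and passing to infima of $\|\cdot\|_p$ gives ${\rm mod}_p(\mathcal{L})\le{\rm mod}_p(\mathcal{F})$.

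For item~(2) we may assume $\sigma:=\sum_i{\rm mod}_p(\mathcal{L}_i)^p<\infty$. Given $\varepsilon>0$, choose $f_i\in{\rm adm}_p(\mathcal{L}_i)$ with $\|f_i\|_p^p\le{\rm mod}_p(\mathcal{L}_i)^p+2^{-i}\varepsilon$ and set $f=\sup_i f_i$, a nonnegative measurable function. For a.e.\ $L\in\mathcal{F}$ there is an $i$ with $L\in\mathcal{L}_i$, whence $\int_L f\,d\mu_L\ge\int_L f_i\,d\mu_L\ge1$ (the exceptional set is a countable union of null families), so $f\in{\rm adm}_p(\mathcal{F})$; and from $(\sup_i f_i)^p\le\sum_i f_i^p$ we get $\|f\|_p^p\le\sum_i\|f_i\|_p^p\le\sigma+\varepsilon$. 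Letting $\varepsilon\downarrow0$ yields ${\rm mod}_p(\mathcal{F})^p\le\sigma$. For item~(3): if some $f\in{\rm adm}_p(\mathcal{F})$ has $\int_L f\,d\mu_L=\infty$ for a.e.\ $L$, then $\varepsilon f\in{\rm adm}_p(\mathcal{F})$ for every $\varepsilon>0$, so ${\rm mod}_p(\mathcal{F})\le\varepsilon\|f\|_p$ and ${\rm mod}_p(\mathcal{F})=0$; conversely, if ${\rm mod}_p(\mathcal{F})=0$, choose $f_i\in{\rm adm}_p(\mathcal{F})$ with $\|f_i\|_p\le2^{-i}$, put $f=\sum_i f_i$ (so $\|f\|_p\le1$ and $f\in L^p(M)$), and note that off a null family $\int_L f\,d\mu_L=\sum_i\int_L f_i\,d\mu_L=\infty$ by monotone convergence.

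For item~(4), replacing $f$ by $|f|$ we may assume $f\ge0$; let $\mathcal{L}$ be the union of $\{L\in\mathcal{F}:\int_L f\,d\mu_L=\infty\}$ with the (modulus-zero) family of leaves on which $f$ is not measurable. Its union is measurable by the Tonelli remark, $\tfrac1n f\in{\rm adm}_p(\mathcal{L})$ for every $n$, so ${\rm mod}_p(\mathcal{L})\le\tfrac1n\|f\|_p\to0$, and every $L\in\mathcal{F}\setminus\mathcal{L}$ satisfies $f\in L^1(L)$ by construction. For item~(5), pass to a subsequence with $\|f_{n_i}-f\|_p\le2^{-i}$ and set $h=|f|+\sum_i|f_{n_i}-f|$, so $h\in L^p(M)$ with $\|h\|_p\le\|f\|_p+1$. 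Apply~(4) to $h$: there is a subfamily $\mathcal{L}$ with ${\rm mod}_p(\mathcal{L})=0$ and $h\in L^1(L)$ for a.e.\ $L\in\mathcal{F}\setminus\mathcal{L}$. For such $L$, Tonelli gives $\sum_i\int_L|f_{n_i}-f|\,d\mu_L\le\int_L h\,d\mu_L<\infty$, so $f$ and each $f_{n_i}$ lie in $L^1(L)$ (both dominated by $h$) and $\int_L|f_{n_i}-f|\,d\mu_L\to0$; that is, $f_{n_i}\to f$ in $L^1(L)$.

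The real difficulty lies not in these manipulations but in the underlying measure theory: Lemma~\ref{measurezero} itself, the measurability of the leafwise integral $x\mapsto\int_{L_x}|f|\,d\mu_{L_x}$ (needed in item~(4), so that ${\rm mod}_p(\mathcal{L})$ is defined for the family $\mathcal{L}$ built there), and the interchange of countable sums with leafwise integration used in items~(3) and~(5). All of these reduce, through a partition of unity subordinate to foliated charts, to Tonelli's theorem on the product $(0,1)^k\times(0,1)^{n-k}$; granting them, each of (1)--(5) is short, and there is no circularity, since only item~(5) invokes item~(4) and item~(2) is obtained directly.
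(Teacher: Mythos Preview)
The paper does not supply its own proof of this proposition; it is stated with a citation to \cite{bf,bl1} and used as a black box. Your argument is correct and reproduces the standard proofs from Fuglede's extremal length theory: monotonicity via ${\rm adm}_p(\mathcal{F})\subset{\rm adm}_p(\mathcal{L})$, subadditivity via $f=\sup_i f_i$ and $(\sup_i f_i)^p\le\sum_i f_i^p$, the zero-modulus criterion via scaling and the telescoping sum $\sum_i f_i$, item~(4) by noting that $\tfrac1n|f|$ is admissible for the family of leaves on which $\int_L|f|=\infty$, and item~(5) by summing a rapidly convergent subsequence and feeding it back into~(4). The measurability caveats you flag (leafwise integrals, Tonelli in foliated charts) are exactly the points one has to check in the foliated setting, and you handle them appropriately via Lemma~\ref{measurezero}.
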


\section{Existence and properties of extremal function}

Let $M$ be a Riemannian manifold, $\mathcal{F}$ a foliation on $M$.

\begin{theorem}\label{extt1}
There is an extremal function $f_0$ for $p$--modulus of $\mathcal{F}$ if and only if for any subfamily $\mathcal{L}\subset\mathcal{F}$ such that $\mu(\bigcup\mathcal{L})>0$ we have ${\rm mod}_p(\mathcal{L})>0$.
\end{theorem}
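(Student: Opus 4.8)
The plan is to prove the two implications separately, with Lemma~\ref{measurezero} and Proposition~\ref{propmod} as the main tools.

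For $\Longrightarrow$ I would argue by contraposition. Suppose some subfamily $\mathcal{L}\subset\mathcal{F}$ has $\mu(\bigcup\mathcal{L})>0$ and ${\rm mod}_p(\mathcal{L})=0$; I will show that an arbitrary $f_0\in{\rm adm}_p(\mathcal{F})$ is not extremal by exhibiting a cheaper competitor. Put $E=\bigcup\mathcal{L}$; since leaves partition $M$, every leaf $L$ satisfies $L\subset E$ or $L\cap E=\emptyset$. By Proposition~\ref{propmod}(3) applied to $\mathcal{L}$ there is $h\in{\rm adm}_p(\mathcal{L})\subset L^p(M)$ with $\int_L h\,d\mu_L=\infty$ for a.e.\ $L\in\mathcal{L}$; then $\delta h$ is admissible for $\mathcal{L}$ for every $\delta>0$. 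Let $f_1$ agree with $f_0$ off $E$ and with $\delta h$ on $E$. Then $f_1\ge0$, $f_1\in L^p(M)$, and $f_1\in{\rm adm}_p(\mathcal{F})$ (leaves inside $E$ acquire infinite integral, leaves off $E$ are untouched), while
\begin{equation*}
\|f_1\|_p^p=\|f_0\|_p^p-\int_E f_0^p\,d\mu_M+\delta^p\int_E h^p\,d\mu_M.
\end{equation*}
Here $\int_E f_0^p\,d\mu_M>0$: if it vanished, $f_0$ would be $0$ $\mu_M$-a.e.\ on $E$, whence, by Lemma~\ref{measurezero}, $\int_L f_0\,d\mu_L=0$ for a.e.\ $L\in\mathcal{L}$, contradicting the admissibility of $f_0$ over the non-null family $\mathcal{L}$. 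Since $\int_E h^p\,d\mu_M\le\|h\|_p^p<\infty$, a small enough $\delta$ yields $\|f_1\|_p<\|f_0\|_p$.

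For $\Longleftarrow$ I would use the direct method of the calculus of variations. Taking $\mathcal{L}=\mathcal{F}$ in the hypothesis gives ${\rm mod}_p(\mathcal{F})>0$; I may also assume ${\rm mod}_p(\mathcal{F})<\infty$ (otherwise there is no extremal function, hence nothing to prove) and $1<p<\infty$, so that $L^p(M)$ is reflexive. The key step is to show that, \emph{under the hypothesis}, ${\rm adm}_p(\mathcal{F})$ is a closed convex subset of $L^p(M)$. Convexity follows at once from linearity of the leaf integrals. For closedness, take $f_n\in{\rm adm}_p(\mathcal{F})$ with $f_n\to f$ in $L^p(M)$: then $f\ge0$, and Proposition~\ref{propmod}(5) provides a subsequence $(f_{n_i})$ and a subfamily $\mathcal{L}_0$ with ${\rm mod}_p(\mathcal{L}_0)=0$ such that $f_{n_i}\to f$ in $L^1(L)$, and hence $\int_L f\,d\mu_L=\lim_i\int_L f_{n_i}\,d\mu_L\ge1$, for a.e.\ $L\in\mathcal{F}\setminus\mathcal{L}_0$. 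The hypothesis now enters decisively: applied to $\mathcal{L}_0$ it forces $\mu(\bigcup\mathcal{L}_0)=0$, so ``a.e.\ $L\in\mathcal{F}\setminus\mathcal{L}_0$'' coincides with ``a.e.\ $L\in\mathcal{F}$'', and therefore $f\in{\rm adm}_p(\mathcal{F})$.

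Granting this, the rest is standard: a closed convex subset of a reflexive Banach space is weakly closed, so a minimizing sequence $f_n\in{\rm adm}_p(\mathcal{F})$ with $\|f_n\|_p\to{\rm mod}_p(\mathcal{F})$ is bounded and has a subsequence converging weakly to some $f_0\in{\rm adm}_p(\mathcal{F})$; weak lower semicontinuity of $\|\cdot\|_p$ then gives $\|f_0\|_p\le\liminf_n\|f_n\|_p={\rm mod}_p(\mathcal{F})$, so $f_0$ is extremal (uniform convexity of $L^p$ would moreover give uniqueness). The main obstacle is exactly the closedness step: $L^p$ convergence, strong or weak, says nothing a priori about the integrals $\int_L f\,d\mu_L$ over the individual leaves, which are $\mu_M$-null sets; it is Proposition~\ref{propmod}(5) together with the hypothesis — the latter used to absorb the exceptional modulus-zero family $\mathcal{L}_0$ into a genuine $\mu_M$-null union of leaves — that makes the passage to the limit legitimate.
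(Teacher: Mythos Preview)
Your argument is correct and follows the paper's approach in both directions: for $(\Rightarrow)$ you replace $f_0$ on $\bigcup\mathcal{L}$ by a small multiple of the function supplied by Proposition~\ref{propmod}(3), and for $(\Leftarrow)$ you prove ${\rm adm}_p(\mathcal{F})$ is closed via Proposition~\ref{propmod}(5) together with the hypothesis, then conclude by convexity and reflexivity. Your write-up is in fact more careful than the paper's --- you justify $\int_E f_0^p>0$ through Lemma~\ref{measurezero}, and you spell out the direct-method step that the paper compresses into ``it suffices to show that ${\rm adm}_p(\mathcal{F})$ is closed''; one small quibble is that the parenthetical ``otherwise \dots\ nothing to prove'' in the case ${\rm mod}_p(\mathcal{F})=\infty$ is not quite right logically (the conclusion would then fail, not hold vacuously), but this edge case is tacitly excluded by the paper as well.
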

\begin{proof} $(\Rightarrow)$ Suppose there is a family $\mathcal{L}\subset\mathcal{F}$ such that $\mu(\bigcup\mathcal{L})>0$ and ${\rm mod}_p(\mathcal{L})=0$. By Proposition \ref{propmod} there is an admissible function $f$ for $\mathcal{L}$ such that $\int_L f=\infty$ for almost every leaf $L\in\mathcal{L}$. Moreover, there is $n>0$ such that $\|f_0\|_p>\|\frac{1}{n}f\|_p$ on $\bigcup\mathcal{L}$. Put
\[
\tilde{f}_0=\left\{\begin{array}{cc} f_0 & \textrm{on $M\setminus\bigcup\mathcal{L}$} \\ \frac{1}{n}f & \textrm{on $\bigcup\mathcal{L}$} \end{array}\right..
\]
Then $\tilde{f}_0$ is admissible for $\mathcal{F}$ but $\|f_0\|_p>\|\tilde{f}_0\|_p$. Contradiction.

$(\Leftarrow)$  It suffices to show that ${\rm adm}_p(\mathcal{F})$ is closed in $L^p(M)$. Take the sequence $(f_n)$ of admissible functions convergent to $f$. Then $f\in L^p(M)$ and $f\geq 0$. By Proposition \ref{propmod} there is a subsequence $(f_{i_j})$ and a subfamily $\mathcal{L}$ such that ${\rm mod}_p(\mathcal{E})=0$  and $\int_L f_{n_i}d\mu_L\to\int_L f d\mu_L$ for every $L\in\mathcal{F}\setminus\mathcal{L}$. Hence $\int_L f d\mu_L\geq 1$ for every $L\in\mathcal{F}\setminus\mathcal{L}$. Since by assumption $\mu(\bigcup\mathcal{L})=0$, we get that $\int_L f d\mu_L\geq 1$ for almost every leaf $L\in\mathcal{F}$. Thus $f$ is admissible.
\end{proof}

Put
\begin{align*}
F_{\infty}=\{L\in\mathcal{F}:\, \mu_{L}(L)=\infty\}\quad\textrm{and}\quad M_{\infty}=\bigcup\mathcal{F}_{\infty}
\end{align*}

\begin{corollary}\label{extc1}
If $\mu_M(M)<\infty$ and there is an extremal function for $p$--modulus of $\mathcal{F}$ for some $p>1$, then $\mu(M_{\infty})=0$.
\end{corollary}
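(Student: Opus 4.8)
The plan is to derive the conclusion from Theorem \ref{extt1} by showing that the subfamily $\mathcal F_\infty$ of infinite--volume leaves has vanishing $p$--modulus. Since an extremal function for the $p$--modulus of $\mathcal F$ is assumed to exist, Theorem \ref{extt1} guarantees that every subfamily $\mathcal L\subset\mathcal F$ with $\bigcup\mathcal L$ measurable and $\mu\bigl(\bigcup\mathcal L\bigr)>0$ has $\mathrm{mod}_p(\mathcal L)>0$; read contrapositively, $\mathrm{mod}_p(\mathcal L)=0$ forces $\mu\bigl(\bigcup\mathcal L\bigr)=0$. Taking $\mathcal L=\mathcal F_\infty$, for which $\bigcup\mathcal L=M_\infty$, it therefore suffices to prove that $\mathrm{mod}_p(\mathcal F_\infty)=0$.

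To bound $\mathrm{mod}_p(\mathcal F_\infty)$ I would test it against the cheapest admissible functions available, namely constants on $M_\infty$. Fix $\varepsilon>0$ and put $f_\varepsilon=\varepsilon\,\mathbf 1_{M_\infty}$. Every leaf $L\in\mathcal F_\infty$ satisfies $L\subseteq M_\infty$ because $M_\infty=\bigcup\mathcal F_\infty$, so
\[
\int_L f_\varepsilon\,d\mu_L=\varepsilon\,\mu_L(L)=\infty\ge 1 ,
\]
which shows $f_\varepsilon\in\mathrm{adm}_p(\mathcal F_\infty)$. On the other hand, $\mu_M(M)<\infty$ gives $f_\varepsilon\in L^p(M)$ with $\|f_\varepsilon\|_p=\varepsilon\,\mu_M(M_\infty)^{1/p}\le\varepsilon\,\mu_M(M)^{1/p}$. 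Hence $\mathrm{mod}_p(\mathcal F_\infty)\le\varepsilon\,\mu_M(M)^{1/p}$ for every $\varepsilon>0$, so $\mathrm{mod}_p(\mathcal F_\infty)=0$, and the first paragraph then yields $\mu_M(M_\infty)=0$.

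The computation is short, and I expect the only point requiring attention to be the measurability of $M_\infty$: this is implicit in the statement of the corollary and is used both to form $\mathrm{mod}_p(\mathcal F_\infty)$ and to apply Theorem \ref{extt1} to the subfamily $\mathcal F_\infty$. Beyond that the corollary is essentially an elementary observation: on a leaf of infinite volume every positive constant is admissible, so when the ambient volume is finite the modulus of $\mathcal F_\infty$ must collapse to $0$, which is incompatible with the existence of an extremal function unless $M_\infty$ is negligible. I would also note that the argument itself uses only $p\in[1,\infty)$; the hypothesis $p>1$ enters only through Theorem \ref{extt1}, where it is needed to produce the extremal function in the first place.
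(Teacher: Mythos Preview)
Your proof is correct and follows essentially the same route as the paper: show $\mathrm{mod}_p(\mathcal F_\infty)=0$ using constants, then apply Theorem~\ref{extt1}. The only cosmetic difference is that the paper invokes Proposition~\ref{propmod}(3) with the single function $f=1$ (which is in $L^p(M)$ since $\mu_M(M)<\infty$ and has $\int_L f\,d\mu_L=\infty$ on every $L\in\mathcal F_\infty$), whereas you inline the easy direction of that proposition via the scaling $f_\varepsilon=\varepsilon\,\mathbf 1_{M_\infty}$.
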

\begin{proof} Function $f=1$ satisfies the conditions of Proposition \ref{propmod}(3) for $\mathcal{F}_{\infty}$. Hence ${\rm mod}_p(\mathcal{F}_{\infty})=0$. Thus, by Theorem \ref{extt1}, $\mu(M_{\infty})=0$.
\end{proof}

We will specify the conditions for the existence of an extremal function in the case of a foliation given by a level sets of a submersion. 

Let $\Phi:M\to N$ be a submersion between Riemannian manifolds. Then there is a decomposition
\begin{equation*}
TM=\mathcal{V}^{\Phi}\oplus\mathcal{V}^{\Phi},
\end{equation*}
where $\mathcal{V}^{\Phi}=\ker\Phi_{\ast}$ and $\mathcal{H}^{\Phi}=(\mathcal{V}^{\Phi})^{\bot}$. The differential $\Phi_{\ast}:\mathcal{H}^{\Phi}\to TN$ is a linear isomorphism. Denoting its dual as $\Phi_{\ast}^{\top}:TN\to\mathcal{H}^{\Phi}$ we define the Jacobian $J\Phi$ of $\Phi$ as follows
\begin{equation*}
J\Phi=\sqrt{\det(\Phi_{\ast}^{\top}\circ\Phi_{\ast}:\mathcal{H}^{\Phi}\to\mathcal{H}^{\Phi})}.
\end{equation*}
We will need the following version of Fubini theorem \cite{ic,sw}
\begin{equation}\label{fubini}
\int_M f\,d\mu_M=\int_{y\in N}\left( \int_{\Phi^{-1}(y)}\frac{f}{J\Phi}\,d\mu_{\Phi^{-1}(y)}\right)\,d\mu_N,
\end{equation}  
for any nonnegative and measurable function $f$.

\begin{proposition} \label{extp0}
Assume a foliation $\mathcal{F}$ is given by a submersion $\Phi:M\to N$ such that $J\Phi<C$ for some $C$. Let $\mathcal{L}\subset\mathcal{F}$. If ${\rm mod}_p(\mathcal{L})=0$ for some $p>1$, then $\mathcal{L}\subset\mathcal{F}_{\infty}$.
\end{proposition}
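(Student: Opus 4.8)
The plan is to argue by contraposition: assuming $\mathcal{L}\not\subset\mathcal{F}_\infty$, I will produce an admissible function for a suitable subfamily of $\mathcal{L}$ with arbitrarily small $L^p$-norm, contradicting ${\rm mod}_p(\mathcal{L})=0$ only if that modulus were positive — more precisely, I will show directly that ${\rm mod}_p(\mathcal{L})>0$. Since $\mathcal{L}\not\subset\mathcal{F}_\infty$ means there is a leaf $L\in\mathcal{L}$ with $\mu_L(L)<\infty$, and in fact by decomposing $\mathcal{L}$ according to the value of $\mu_L(L)$ I may pass to a subfamily $\mathcal{L}'\subset\mathcal{L}$ of positive measure on which $\mu_L(L)\le R$ for some fixed $R<\infty$ (if every such subfamily had measure zero, then $\mathcal{L}\subset\mathcal{F}_\infty$ up to a null set, and by Lemma \ref{measurezero}-type reasoning combined with monotonicity of modulus this forces ${\rm mod}_p(\mathcal{L})={\rm mod}_p(\mathcal{L}\setminus\mathcal{F}_\infty)$, reducing to that case). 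By Proposition \ref{propmod}(1), ${\rm mod}_p(\mathcal{L})\ge{\rm mod}_p(\mathcal{L}')$, so it suffices to show ${\rm mod}_p(\mathcal{L}')>0$.

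The heart of the argument is a lower bound for $\|f\|_p$ valid for every $f\in{\rm adm}_p(\mathcal{L}')$. Fix such an $f$. For almost every $L\in\mathcal{L}'$ we have $\int_L f\,d\mu_L\ge 1$ while $\mu_L(L)\le R$; by Hölder's inequality on the leaf,
\[
1\le\int_L f\,d\mu_L\le\Bigl(\int_L f^p\,d\mu_L\Bigr)^{1/p}\mu_L(L)^{1-1/p}\le R^{1-1/p}\Bigl(\int_L f^p\,d\mu_L\Bigr)^{1/p},
\]
so that $\int_L f^p\,d\mu_L\ge R^{1-p}$ for a.e.\ $L\in\mathcal{L}'$. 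Now I integrate this over the base $N$ using the Fubini formula \eqref{fubini} applied to the nonnegative function $f^p\cdot\mathbf{1}_{\bigcup\mathcal{L}'}$: writing $B\subset N$ for the (measurable) set of parameters $y$ with $\Phi^{-1}(y)\in\mathcal{L}'$,
\[
\|f\|_p^p\ge\int_{\bigcup\mathcal{L}'}f^p\,d\mu_M=\int_{y\in B}\Bigl(\int_{\Phi^{-1}(y)}\frac{f^p}{J\Phi}\,d\mu_{\Phi^{-1}(y)}\Bigr)\,d\mu_N\ge\frac{1}{C}\int_{y\in B}\Bigl(\int_{\Phi^{-1}(y)}f^p\,d\mu_{\Phi^{-1}(y)}\Bigr)\,d\mu_N\ge\frac{R^{1-p}}{C}\,\mu_N(B),
\]
using the bound $J\Phi<C$ in the middle step. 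Since $\mu_M(\bigcup\mathcal{L}')>0$, the same Fubini formula (or rather its consequence that a set of positive $\mu_M$-measure cannot be swept out by $\mu_N$-null parameter sets, the leaves having finite measure) gives $\mu_N(B)>0$. Hence $\|f\|_p^p\ge R^{1-p}\mu_N(B)/C>0$ uniformly in $f$, so ${\rm mod}_p(\mathcal{L}')\ge\bigl(R^{1-p}\mu_N(B)/C\bigr)^{1/p}>0$, and therefore ${\rm mod}_p(\mathcal{L})>0$ — the desired contradiction.

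The main obstacle I anticipate is the measure-theoretic bookkeeping on the base: I need to know that the subfamily $\mathcal{L}'$ corresponds to a measurable set $B\subset N$ of positive $\mu_N$-measure, and that "$\mu_M(\bigcup\mathcal{L})>0$ and $\mathcal{L}\subset\mathcal{F}_\infty$ up to null sets" does not actually occur in a way that escapes the reduction — i.e., that one can genuinely split off the infinite-measure leaves. Both points follow from the Fubini formula \eqref{fubini} together with Lemma \ref{measurezero} (a union of leaves is $\mu_M$-null iff the corresponding parameter set is $\mu_N$-null, given the local product structure), but stating them cleanly is where care is needed; everything else is the Hölder estimate and the uniform bound on $J\Phi$, which do the real work.
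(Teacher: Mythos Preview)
Your argument is correct and proceeds along a genuinely different line from the paper's. The paper does not bound ${\rm mod}_p(\mathcal{L}')$ from below directly; instead it invokes Proposition~\ref{propmod}(3): since ${\rm mod}_p(\mathcal{L})=0$, there exists an admissible $f\in L^p(M)$ with $\int_L f\,d\mu_L=\infty$ for a.e.\ $L\in\mathcal{L}$. On a leaf $L\in\mathcal{L}'$ of finite measure, H\"older applied with the factorisation $f=\bigl(f/(J\Phi)^{1/p}\bigr)\cdot(J\Phi)^{1/p}$ then forces $\int_L f^p/J\Phi=\infty$, and Fubini~\eqref{fubini} gives $\|f\|_p^p=\infty$, contradicting $f\in L^p(M)$. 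Your route avoids Proposition~\ref{propmod}(3) entirely and is more quantitative, producing the explicit lower bound ${\rm mod}_p(\mathcal{L}')\ge\bigl(R^{1-p}\mu_N(B)/C\bigr)^{1/p}$; the price is the extra decomposition step to secure a uniform bound $\mu_L(L)\le R$, whereas the paper only needs $\mu_L(L)<\infty$ leafwise because it drives the leaf integral to infinity rather than bounding it below. The measure-theoretic bookkeeping you flag---measurability of $B\subset N$, and that $\mu_M(\bigcup\mathcal{L}')>0$ implies $\mu_N(B)>0$---is genuinely required in both arguments; the paper passes over it silently, so your explicit acknowledgment of this point is if anything more careful.
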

\begin{proof}
Suppose $\mathcal{L}\not\subset\mathcal{F}_{\infty}$. Then there is a subfamily $\mathcal{L}'$ of $\mathcal{L}$ such that $\mu(\bigcup\mathcal{L}')>0$ and $\mu_L(L)<\infty$ for $L\in\mathcal{L}'$. By Proposition \ref{propmod}(3) there is an admisible function $f$ such that $\int_L f=\infty$ for almost every leaf $L\in\mathcal{L}$. Thus by H\"older inequality we have
\begin{equation*}
\int_L f d\mu_L =\int_L\frac{f}{(J\Phi)^{\frac{1}{p}}}(J\Phi)^{\frac{1}{p}} d\mu_L\leq\bigg( \int_L\frac{f^p}{J\Phi} d\mu_L \bigg)^{\frac{1}{p}} \bigg( \int_L(J\Phi)^{\frac{q}{p}} d\mu_L \bigg)^{\frac{1}{q}},
\end{equation*}
where $\frac{1}{p}+\frac{1}{q}=1$. Hence
\begin{equation} \label{extp0e}
\int_L\frac{f^p}{J\Phi} d\mu_L=\infty\quad\textrm{for almost every $L\in\mathcal{L}'$}.
\end{equation}
By \eqref{extp0e} and Fubini theorem \eqref{fubini} 
\[
\|f\|_p^p=\int_Mf^p d\mu_M=\int_N\big( \int_{\Phi^{-1}(y)}\frac{f^p}{J\Phi}d\mu_{\Phi^{-1}(y)}\big) d\mu_N=\infty.
\]
Contradiction ends the proof.
\end{proof}
\begin{corollary} \label{extc2}
Assume a foliation $\mathcal{F}$ is given by a submersion $\Phi:M\to N$. Assume $J\Phi<C$ for some $C$ and $\mu_M(M)<\infty$. Then there is an extremal function for $p$--modulus of $\mathcal{F}$ (for any $p>1$) if and only if $\mu(M_{\infty})=0$.
\end{corollary}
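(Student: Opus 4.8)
The plan is to derive both implications by assembling the three results already established; no fresh construction is required.

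For the forward direction I would simply invoke Corollary \ref{extc1}. Assuming an extremal function for $p$-modulus of $\mathcal{F}$ exists, the hypotheses $\mu_M(M)<\infty$ and $p>1$ are exactly what that corollary needs, and it delivers $\mu(M_\infty)=0$ at once. (It is worth noting that the boundedness assumption $J\Phi<C$ plays no role in this direction.)

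For the converse, assume $\mu(M_\infty)=0$ and appeal to the criterion of Theorem \ref{extt1}: it suffices to show that every subfamily $\mathcal{L}\subset\mathcal{F}$ with $\mu(\bigcup\mathcal{L})>0$ has ${\rm mod}_p(\mathcal{L})>0$. I would argue by contradiction. If some such $\mathcal{L}$ satisfied ${\rm mod}_p(\mathcal{L})=0$, then Proposition \ref{extp0}, whose hypotheses are precisely the standing assumptions that $\mathcal{F}$ is the level-set foliation of a submersion $\Phi$ with $J\Phi<C$, would force $\mathcal{L}\subset\mathcal{F}_\infty$. Hence $\bigcup\mathcal{L}\subset\bigcup\mathcal{F}_\infty=M_\infty$, so $\mu(\bigcup\mathcal{L})\le\mu(M_\infty)=0$, contradicting $\mu(\bigcup\mathcal{L})>0$. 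Therefore the hypothesis of Theorem \ref{extt1} holds and an extremal function exists.

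The whole argument is essentially bookkeeping, so there is no genuine obstacle; the only point demanding a moment's attention is making sure the sets $\bigcup\mathcal{L}$ and $M_\infty$ are measurable, so that $\mu(\bigcup\mathcal{L})$ makes sense and Theorem \ref{extt1} applies. Since $\mathcal{F}$ consists of the fibers $\Phi^{-1}(y)$, any subfamily $\mathcal{L}$ corresponds to a subset of the base $N$, and $\mathcal{F}_\infty$ to $\{y\in N:\ \mu_{\Phi^{-1}(y)}(\Phi^{-1}(y))=\infty\}$; measurability of these base sets, together with the Fubini formula \eqref{fubini}, gives measurability of $\bigcup\mathcal{L}$ and $M_\infty$ upstairs. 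This corollary is simply the place where Corollary \ref{extc1}, Proposition \ref{extp0}, and Theorem \ref{extt1} combine cleanly.
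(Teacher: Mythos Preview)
Your proof is correct and follows essentially the same route as the paper: the forward direction is Corollary~\ref{extc1}, and the converse combines Proposition~\ref{extp0} with Theorem~\ref{extt1} via the contrapositive (the paper phrases it as ``take $\mathcal{L}$ with ${\rm mod}_p(\mathcal{L})=0$, then $\mu(\bigcup\mathcal{L})=0$'' rather than by contradiction, but this is the same argument). Your additional remark on measurability is a reasonable point of care that the paper leaves implicit.
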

\begin{proof}($\Rightarrow$) Follows by Corollary \ref{extc1}.

($\Leftarrow$) Assume $\mu(M_{\infty})$=0. Take $\mathcal{L}\subset\mathcal{F}$ such that ${\rm mod}_p(\mathcal{L})=0$. Then $\mathcal{L}\subset\mathcal{F}_{\infty}$  by Proposition \ref{extp0}. Hence $\mu(\bigcup\mathcal{L})\leq \mu(M_{\infty})=0$. Therefore Theorem \ref{extt1} implies that there is an extremal function for $\mathcal{F}$. 
\end{proof}

In the end of this section we state some properties of an extremal function.

\begin{proposition} \label{pef}
Let $f_0$ be an extremal function for $p$--modulus of a foliation $\mathcal{F}$. Then
\begin{enumerate}
\item $\int_L f_0 d\mu_L=1$ for almost every leaf $L\in\mathcal{F}$,
\item $f_0$ is positive.
\end{enumerate}
\end{proposition}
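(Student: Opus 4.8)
The plan is to prove both parts by the standard "if the extremal function had slack, we could shrink it" argument, exploiting strict convexity of the $L^p$ norm for $p>1$.

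For part (1), I would argue by contradiction. Suppose the set $\mathcal{E}$ of leaves $L$ with $\int_L f_0\,d\mu_L>1$ satisfies $\mu(\bigcup\mathcal{E})>0$; by countable additivity one may even assume there is $\varepsilon>0$ and a subfamily on which $\int_L f_0\,d\mu_L\geq 1+\varepsilon$ and $f_0$ is not a.e.\ zero there. The natural move is to rescale $f_0$ by a constant factor $\tfrac{1}{1+\varepsilon}$ on $\bigcup\mathcal{E}$ and leave it unchanged elsewhere, producing $\tilde f_0$; this $\tilde f_0$ is still admissible for $\mathcal{F}$, but $\|\tilde f_0\|_p<\|f_0\|_p$ unless $f_0$ vanishes a.e.\ on $\bigcup\mathcal{E}$. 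The one subtlety is the possibility that $f_0=0$ a.e.\ on the offending leaves while still $\int_L f_0>1$ — which is impossible — or more relevantly that $\mathrm{mod}_p(\mathcal{F})=0$, in which case the statement is vacuous or handled separately via Theorem \ref{extt1}; so I would first note that the existence of $f_0$ forces $\mathrm{mod}_p(\mathcal{F})>0$ whenever $\mu(\bigcup\mathcal{F})>0$, hence $\|f_0\|_p>0$.

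For part (2), again suppose for contradiction that $Z=\{x:f_0(x)=0\}$ has $\mu_M(Z)>0$. By Lemma \ref{measurezero} there is a positive-measure set of leaves $L$ with $\mu_{L}(Z\cap L)>0$. The idea is to push a little mass into the zero set: fix a measurable $g\in L^p(M)$ with $g>0$ supported on $Z$ and small norm (e.g.\ a suitably truncated and weighted indicator of $Z$, integrable since we may intersect with a set of finite measure and bound it). For $t>0$ consider $f_t=(1-t)f_0+t g'$ where $g'$ is chosen so that $f_t$ remains admissible — here one must be careful that adding $g$ on $Z$ may not by itself keep $\int_L f_t\geq 1$ on every leaf, so instead I would take $f_t = f_0 + t g$ directly, which is admissible since $\int_L f_t\,d\mu_L\geq\int_L f_0\,d\mu_L\geq 1$, and then show $\|f_0+tg\|_p<\|f_0\|_p$ fails — that doesn't work since the norm increases. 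The correct construction is to combine the two parts: use part (1) to replace $f_0$ by something with $\int_L f_0=1$ exactly, then move mass from where $f_0$ is large into $Z$ along leaves while preserving $\int_L(\cdot)\,d\mu_L$, and invoke strict convexity (or strict subadditivity $\|a+b\|_p^p<\|a\|_p^p+\|b\|_p^p$ type estimates for $p>1$ on disjoint-support perturbations with cancellation) to get a strict decrease of the norm.

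The main obstacle is precisely this leafwise mass-transfer in part (2): one needs to modify $f_0$ on a positive-measure set of leaves, subtracting $\delta$ near points where $f_0$ is bounded below by a positive constant and adding a compensating amount on $Z\cap L$, arranged so that the pointwise change is small enough that $|f_0-\delta|^p+|{\rm new\ mass}|^p < f_0^p$ in net — this works because $t\mapsto t^p$ has strictly positive second derivative, so spreading a fixed integral more evenly strictly decreases the $L^p$ norm. Making this measurable-selection and integrability bookkeeping rigorous (choosing the transfer locally, patching via a partition of unity adapted to the product structure from Lemma \ref{measurezero}, controlling the $L^p$ norms) is the technical heart; everything else is a direct application of admissibility and the definition of the modulus.
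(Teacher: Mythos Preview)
Your approach to part~(1) works but is clumsier than necessary. The paper simply sets
\[
f(x)=\frac{f_0(x)}{\int_{L_x} f_0\,d\mu_{L_x}},
\]
observes that $f$ is admissible with $f\le f_0$, and concludes $f=f_0$ by extremality. This avoids the contradiction setup, the choice of $\varepsilon$, and the countable-additivity reduction entirely.

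For part~(2) you correctly diagnose that the issue is a leafwise mass transfer into the zero set, but you never arrive at a concrete competitor, and the local ``subtract $\delta$ where $f_0$ is bounded below, add a compensating piece on $Z\cap L$'' scheme you sketch would indeed entail the measurable-selection and patching headaches you anticipate. The paper sidesteps all of this with a \emph{global} perturbation: writing $A=\{f_0=0\}$ (finite positive measure) and $A_1=\{x\in A:\,0<\mu_{L_x}(L_x\cap A)<\infty\}$, it takes
\[
f(x)=\begin{cases}\dfrac{1-t}{\mu_{L_x}(L_x\cap A_1)} & x\in A_1,\\[1ex] t\,f_0(x) & x\in M\setminus A_1,\end{cases}
\]
for $t\in[0,1]$. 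Since $f_0\equiv 0$ on $A_1$ and $\int_L f_0=1$ by part~(1), one checks $\int_L f=1$ on the relevant leaves, and the norm is simply $\|f\|_p^p=(1-t)^pC+t^p\|f_0\|_p^p$ for a fixed constant $C$; a one-variable calculus argument ($\alpha(1)=0$, $\alpha'(1)>0$) then yields $t<1$ with $\|f\|_p<\|f_0\|_p$. The point you are missing is that scaling $f_0$ uniformly by $t$ frees up exactly $1-t$ of integral on every leaf at once, so the compensating term can be written down explicitly with no selection argument.
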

\begin{proof}
$(1)$ Put $M_{f_0}=\{x\in M:\, \int_{L_x}f_0\,d\mu_{L_x}<\infty\}$. By Proposition \ref{propmod}(3) and Theorem \ref{extt1} $\mu_M(M\setminus M_{f_0})=0$. Hence we may assume $M=M_{f_0}$. Consider a function
\begin{equation*}
f(x)=\frac{f_0(x)}{\int_{L_x} f_0\,d\mu_{L_x}},\quad x\in M.
\end{equation*}
Then $f$ is admissible, $f\leq f_0$ and $\|f\|_p\leq \|f_0\|_p$. Since $f_0$ is extremal, we have $\|f\|_p=\|f_0\|_p$. Therefore $f=f_0$, so $\int_L f_0d\mu_L=1$ for almost every leaf $L\in\mathcal{F}$.

$(2)$ Suppose there is a set $A$ of positive and finite measure such that $f_0=0$ on $A$. Put 
\begin{align*}
A_1 &=\{x\in A :\, 0<\mu_{L_x}(L_x\cap A)<\infty\}, \\
A_2 &=\{x\in A :\, \mu_{L_x}(L_x\cap A)=\infty\}, \\
A_3 &=\{x\in A :\, \mu_{L_x}(L_x\cap A)=0\}.
\end{align*}
By Lemma \ref{measurezero} $\mu_M(A_3)=0$. Hence we may assume $A=A_1\cup A_2$. Let $0\leq t\leq 1$ and put
\begin{equation*}
f(x)=\left\{\begin{array}{ll} \frac{1-t}{\mu_{L_x}(L_x\cap A_1)} & x\in A_1 \\ tf_0(x) & x\in M\setminus A_1 \end{array}\right.
\end{equation*}
Then $f\geq 0$ and $\int_L f d\mu_L=1$ for almost every leaf $L\in\mathcal{F}$. Moreover,
\begin{equation*}
\|f\|_p^p=(1-t)^pC+t^p\|f_0\|_p^p,\quad\textrm{where $C=\int_{A_1}\frac{1}{\mu_{L_x}(L_x\cap A_1)^p}d\mu$}.
\end{equation*}
Hence $\|f\|_p<\|f_0\|_p$ if and only if the function
\[
\alpha(t)=(1-t)^pC+t^p\|f_0\|_p^p-\|f_0\|_p^p,\quad 0\leq t\leq 1,
\]
is negative at some $t$. Existence of such $t$ follows from the fact that $\alpha(1)=0$ and $\alpha '(1)>0$.
\end{proof}

\begin{remark}
Proposition \ref{pef}(1) was first established in \cite{bl2} but obtained under the assumption of continuity of extremal function.
\end{remark}

\section{The integral formula}

Let $M$ be a Riemannian manifold, $\mathcal{F}$ a foliation on $M$. Assume there exists an extremal function $f_0$ for $p$--modulus of $\mathcal{F}$. 

\begin{lemma}\label{pto1}
If $\varphi\in L^p(M)$, then $\varphi\in L^1(L)$ for almost every leaf $L\in\mathcal{F}$.
\end{lemma}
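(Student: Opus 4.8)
The plan is to reduce the claim about $\varphi \in L^p(M)$ to a statement about $|\varphi|^p$, which is nonnegative and integrable, and then to combine Lemma \ref{measurezero} with Proposition \ref{propmod}(4) and the hypothesis that an extremal function exists. First I would observe that it suffices to treat $\varphi \geq 0$: replacing $\varphi$ by $|\varphi|$ changes neither membership in $L^p(M)$ nor membership in $L^1(L)$ for a given leaf. So assume $\varphi \geq 0$.

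Next I would apply Proposition \ref{propmod}(4) to $\varphi \in L^p(M)$: there is a subfamily $\mathcal{L} \subset \mathcal{F}$ with ${\rm mod}_p(\mathcal{L}) = 0$ such that $\varphi \in L^1(L)$ for almost every leaf $L \in \mathcal{F} \setminus \mathcal{L}$. The point now is that the existence of an extremal function, via Theorem \ref{extt1}, forces every modulus-zero subfamily to cover a set of $\mu_M$-measure zero: indeed the contrapositive of Theorem \ref{extt1} says that if $\mu_M(\bigcup \mathcal{L}) > 0$ then ${\rm mod}_p(\mathcal{L}) > 0$, so ${\rm mod}_p(\mathcal{L}) = 0$ yields $\mu_M(\bigcup \mathcal{L}) = 0$.

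Finally I would convert $\mu_M(\bigcup \mathcal{L}) = 0$ into a statement that the bad leaves form a null family in the sense needed. By Lemma \ref{measurezero} applied to the measurable set $A = \bigcup \mathcal{L}$, the equality $\mu_M(A) = 0$ gives $\mu_M(\{x : \mu_{L_x}(A \cap L_x) > 0\}) = 0$; in particular, for almost every $x \in M$ the leaf $L_x$ satisfies $\mu_{L_x}(L_x \cap \bigcup\mathcal{L}) = 0$, and since the leaves in $\mathcal{L}$ are contained in $\bigcup\mathcal{L}$, almost every leaf lies outside $\mathcal{L}$. Combining this with the conclusion of Proposition \ref{propmod}(4), we get $\varphi \in L^1(L)$ for almost every leaf $L \in \mathcal{F}$, which is the assertion.

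The only delicate point is the bookkeeping between the two different notions of ``almost every leaf'' — the one coming from Proposition \ref{propmod}(4) (exceptional set is a modulus-zero subfamily) and the one in the conclusion (exceptional set of leaves is $\mu_M$-null in the sense of Lemma \ref{measurezero}) — but the passage is exactly what Theorem \ref{extt1} together with Lemma \ref{measurezero} is designed to supply, so I do not expect a genuine obstacle here; it is essentially a one-line consequence once those two results are invoked in the right order.
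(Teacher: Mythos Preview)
Your proposal is correct and follows essentially the same route as the paper: the paper's one-line proof invokes Proposition~\ref{propmod}(4) together with the characterization of extremal-function existence (Theorem~\ref{extt1}, cited there as ``Theorem~\ref{extc1}'', apparently a typo), which is exactly the combination you spell out. Your explicit appeal to Lemma~\ref{measurezero} and the reduction to $\varphi\geq 0$ are harmless elaborations of that same argument.
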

\begin{proof}
Follows immediately by Theorem \ref{extc1} and Proposition \ref{propmod}(4). 
\end{proof}

By above lemma, for any $\varphi\in L^p(M)$ the following function 
\begin{equation*}
\hat\varphi(x)=\int_{L_x}\varphi\,d\mu_{L_x},\quad x\in M
\end{equation*}
is well defined.

\begin{theorem}\label{intfor}
Assume ${\rm essinf}_Mf_0>0$. Then there is the following integral formula
\begin{equation} \label{pefe1}
\int_M f_0^{p-1}\varphi \,d\mu_M=\int_M f_0^p\hat{\varphi} \,d\mu_M,
\end{equation}
for any function $\varphi\in L^P(M)$ such that ${\rm esssup}|\varphi|<\infty$ and ${\rm esssup}_M|\hat{\varphi}|<\infty$.
\end{theorem}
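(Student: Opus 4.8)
The plan is a variational argument around $f_0$. First I would observe that the hypothesis ${\rm essinf}_M f_0 > 0$ together with $f_0 \in L^p(M)$ forces $\mu_M(M) < \infty$, since $\int_M f_0^p\,d\mu_M \geq ({\rm essinf}_M f_0)^p\,\mu_M(M)$. Consequently the constant functions belong to $L^1(M)$ and, by H\"older's inequality, so do $f_0^{p-1}$ and $f_0^p$; these integrability facts will be needed at the last step. One may also assume ${\rm esssup}_M|\varphi| > 0$, the case $\varphi = 0$ a.e.\ being trivial.

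Next I would use Proposition \ref{pef}(1), which gives $\int_{L_x} f_0\,d\mu_{L_x} = 1$ for almost every leaf, together with the fact that $\hat\varphi$ is constant along the leaves of $\mathcal{F}$, to introduce for small $|t|$ the normalized perturbation
\begin{equation*}
f_t(x) = \frac{f_0(x) + t\varphi(x)}{1 + t\hat\varphi(x)}, \qquad x \in M.
\end{equation*}
For $|t|$ less than both $({\rm essinf}_M f_0)/({\rm esssup}_M|\varphi|)$ and $1/({\rm esssup}_M|\hat\varphi|)$, the numerator is nonnegative a.e.\ and the denominator is bounded away from $0$ and from $\infty$, so $f_t \geq 0$, $f_t \in L^p(M)$, and $\int_{L_x} f_t\,d\mu_{L_x} = 1$ for almost every leaf; hence $f_t \in {\rm adm}_p(\mathcal{F})$. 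Since $f_0$ is extremal, the function $t \mapsto \|f_t\|_p^p = \int_M (f_0 + t\varphi)^p(1 + t\hat\varphi)^{-p}\,d\mu_M$ attains its minimum over a neighbourhood of $0$ at $t = 0$, so its derivative there must vanish.

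It then remains to differentiate this function at $t = 0$. A direct computation gives $\partial_t\big[(f_0 + t\varphi)^p(1 + t\hat\varphi)^{-p}\big]\big|_{t=0} = p\,(f_0^{p-1}\varphi - f_0^p\hat\varphi)$, so once the differentiation is justified one obtains $0 = p\int_M (f_0^{p-1}\varphi - f_0^p\hat\varphi)\,d\mu_M$, which is precisely \eqref{pefe1}. The hard part is justifying differentiation under the integral sign. I would control the difference quotients via the mean value theorem, bounding $\big|t^{-1}\big((f_0+t\varphi)^p(1+t\hat\varphi)^{-p} - f_0^p\big)\big|$ by $\sup_{|s|\leq|t|}\big|\partial_s\big[(f_0+s\varphi)^p(1+s\hat\varphi)^{-p}\big]\big|$, and then exhibit a single $L^1(M)$ majorant valid for all small $s$: using that $1 + s\hat\varphi$ is bounded above and below on that range, that $|\varphi|$ and $|\hat\varphi|$ are essentially bounded, and the elementary inequalities $(f_0+s\varphi)^{p-1} \leq C(f_0^{p-1}+1)$ and $(f_0+s\varphi)^p \leq C(f_0^p+1)$, the derivative is dominated by a constant multiple of $f_0^{p-1} + f_0^p + 1$, which lies in $L^1(M)$ by the first step. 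Dominated convergence then validates the interchange and completes the proof.
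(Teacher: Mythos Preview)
Your argument is correct and follows essentially the same route as the paper's own proof: perturb $f_0$ by $\pm t\varphi$, normalize by $1\pm t\hat\varphi$ to stay admissible, and differentiate $\|f_t\|_p^p$ at $t=0$ via the mean value theorem and dominated convergence. The only cosmetic difference is in the majorant---the paper bounds the difference quotient by $(|\varphi|+Cf_0)(f_0+t_0|\varphi|)^{p-1}$ and controls it directly with H\"older's inequality, whereas you first deduce $\mu_M(M)<\infty$ and use the simpler dominant $C(f_0^{p-1}+f_0^p+1)$; both work.
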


\begin{proof}
Let $\varphi\in L^p(M)$, ${\rm esssup}|\varphi|<\infty$ and assume $|\hat\varphi|<C$ for some $C>0$. Put $t_0=\frac{1}{2C}$. For any $t\in[0,t_0)$ consider functions
\begin{equation*}
f^{\pm}_t(x)=\frac{f_0(x)\pm t\varphi(x)}{\widehat{f_0\pm t\varphi}(x)}=\frac{f_0(x)\pm t\varphi(x)}{1\pm t\hat\varphi(x)},\quad x\in M.
\end{equation*}
Then $f^{\pm}_t\in L^p(M)$ and for sufficiently small $t$, $f^{\pm}_t\geq 0$. Clearly $\int_L f^{\pm}_t\,d\mu_L\geq 1$ for almost every $L\in\mathcal{F}$. Hence $f^{\pm}_t\in{\rm adm}_p(\mathcal{F})$. Therefore
\begin{equation}\label{extfune0}
\|f_0\|_p\leq \|f^{\pm}_t\|_p.
\end{equation}
Fix $x\in M$ and define a function $\delta(t)=f^{\pm}_t(x)^p$. Then $\delta(0)=f_0(x)^p$. Since $\delta$ is smooth, by mean value theorem, there is $\theta(t)\in(0,t)$ such that $\delta(t)-\delta(0)=\delta '(\theta(t))t$. Moreover
\begin{equation}\label{extfund}
\delta '(t)=\pm p\frac{(f_0(x)\pm t\varphi(x))^{p-1}(\varphi(x)-f_0(x)\hat\varphi(x))}{(1\pm t\hat\varphi(x))^{p+1}}
\end{equation}
Hence
\begin{equation*}
f^{\pm}_t(x)^p-f_0(x)^p=\pm p\frac{(f_0(x)\pm \theta(t)\varphi(x))^{p-1}(\varphi(x)-f_0(x)\hat\varphi(x))}{(1\pm \theta(t)\hat\varphi(x))^{p+1}}t.
\end{equation*}
We have $1-t_0|\hat\varphi(x)|>1-t_0C=\frac{1}{2}$. Therefore $\frac{1}{1-t_0|\hat\varphi(x)|}<2$. Thus
\begin{align*}
\left|\frac{f^{\pm}_t(x)^p-f^{\pm}_t(x)^p}{t} \right| &\leq p|\varphi(x)-f_0(x)\hat\varphi(x)|\frac{(f_0(x)+t_0|\varphi(x)|)^{p-1}}{(1-t_0|\hat\varphi|)^{p+1}}\\
&\leq 2^{p+1}p(|\varphi(x)|+Cf_0(x))(f_0(x)+t_0|\varphi(x)|)^{p-1}.
\end{align*}
The function $\alpha(x)=(|\varphi(x)|+Cf_0(x))(f_0(x)+t_0|\varphi(x)|)^{p-1}$ is integrable on $M$. Indeed, by H\"older inequality,
\begin{align*}
\int_M g d\mu_M &\leq 
2^{p+1}\left(p\int_M(|\varphi| +f_0 C)^pd\mu_M\right)^\frac{1}{p}
 \left(\int_M((f_0+ t_0|\varphi|)^{p-1})^qd\mu_M\right)^\frac{1}{q}\\
&= 2^{p+1}\left(p\int_M(|\varphi| +f_0 C)^pd\mu_M\right)^ \frac{1}{p}\left(\int_M(f_0+ t_0|\varphi|)^{p}d\mu_M\right)^ \frac{1}{q}< \infty,
\end{align*}
where $\frac{1}{p}+\frac{1}{q}=1$. Hence, by Lebesgue dominated convergence theorem, \eqref{extfund} and \eqref{extfune0}, we have
\begin{align*}
0 &\leq \lim_{t\to 0^+}\int_M \frac{f_t^p-f_0^p}{t}\, d\mu_M\\
&=\int_M\lim_{t\to 0^+}\frac{f_t^p-f_0^p}{t}\, d\mu_M\\
&=\pm p\int_M f_0^{p-1}(\varphi-f_0\hat{\varphi})\, d\mu_M.
\end{align*}
Hence \eqref{pefe1} holds.
\end{proof}

As a corollary we obtain a formula, firstly proven in \cite{kp} using different approach, for $p$--modulus and an extremal function $f_0$ of a foliation given by the level sets of a submersion. 

\begin{corollary} \label{pef5}
Assume there is an extremal function $f_0$ for $p$--modulus of $\mathcal{F}$ and that ${\rm essinf}_Mf_0>0$. If $\mathcal{F}$ is given by a submersion $\Phi:M\to N$, then
\begin{equation} \label{pefe4}
f_0=\frac{J\Phi^{\frac{1}{p-1}}}{\widehat{J\Phi^{\frac{1}{p-1}}}} \quad\textrm{and}\quad {\rm mod}_p(\mathcal{F})= \left(\int_N \left(\widehat{ J \Phi ^{\frac{1}{p-1}}}\right)^{1-p}\,d\mu_N\right)^{\frac{1}{p}}. 
\end{equation}
\end{corollary}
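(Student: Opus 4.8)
The plan is to substitute well‑chosen test functions into the integral formula \eqref{pefe1} of Theorem \ref{intfor} and to disintegrate both sides with the Fubini formula \eqref{fubini}, whose fibres $\Phi^{-1}(y)$, $y\in N$, are exactly the leaves of $\mathcal{F}$. Fix bounded measurable functions $\psi$ on $M$ and $h$ on $N$, arranged — by localizing to a relatively compact open set on which $J\Phi$ is bounded away from $0$ and $\infty$ — so that $\varphi:=\psi\,(h\circ\Phi)$ satisfies the hypotheses of Theorem \ref{intfor}. Since $h\circ\Phi$ is constant along leaves, $\hat\varphi=(h\circ\Phi)\,\hat\psi$, and $\hat\psi$ is leaf‑constant with value $\hat\psi(y)=\int_{\Phi^{-1}(y)}\psi\,d\mu$. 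Applying \eqref{fubini} (extended by linearity to integrable functions) to each side of \eqref{pefe1} and factoring the leaf‑constant quantities $h$ and $\hat\psi$ out of the fibre integrals yields
\begin{equation*}
\int_N h(y)\left[\int_{\Phi^{-1}(y)}\frac{f_0^{p-1}\psi}{J\Phi}\,d\mu_{\Phi^{-1}(y)}-\hat\psi(y)\int_{\Phi^{-1}(y)}\frac{f_0^{p}}{J\Phi}\,d\mu_{\Phi^{-1}(y)}\right]d\mu_N=0 .
\end{equation*}
Because $h$ is arbitrary, the bracket vanishes for $\mu_N$‑a.e.\ $y$; letting $\psi$ run through a countable family of bounded functions generating the $\sigma$‑algebra of measurable sets lets me fix a single $\mu_N$‑null exceptional set and conclude that, for $\mu_N$‑a.e.\ $y$ and every bounded $\psi$, $\int_{\Phi^{-1}(y)}\psi\bigl(\tfrac{f_0^{p-1}}{J\Phi}-c_y\bigr)\,d\mu=0$, where $c_y:=\int_{\Phi^{-1}(y)}\tfrac{f_0^{p}}{J\Phi}\,d\mu$ (here I used $\int_{\Phi^{-1}(y)}f_0\,d\mu=1$ from Proposition \ref{pef}(1)). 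Hence $f_0^{p-1}=c_y\,J\Phi$, i.e.\ $f_0=c_y^{1/(p-1)}\,J\Phi^{1/(p-1)}$, a.e.\ on each leaf.

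To pin down $c_y$ I would use Proposition \ref{pef}(1) once more: integrating $f_0=c_y^{1/(p-1)}J\Phi^{1/(p-1)}$ over $\Phi^{-1}(y)$ gives $c_y^{1/(p-1)}\,\widehat{J\Phi^{1/(p-1)}}(y)=1$, so $c_y^{1/(p-1)}=\bigl(\widehat{J\Phi^{1/(p-1)}}\bigr)^{-1}$ and $c_y=\bigl(\widehat{J\Phi^{1/(p-1)}}(y)\bigr)^{1-p}$. This already gives the first formula, $f_0=J\Phi^{1/(p-1)}/\widehat{J\Phi^{1/(p-1)}}$ a.e.\ on $M$. For the modulus, $\mathrm{mod}_p(\mathcal{F})^p=\|f_0\|_p^p=\int_M f_0^p\,d\mu_M$; one more application of \eqref{fubini} together with $\int_{\Phi^{-1}(y)}\tfrac{f_0^p}{J\Phi}\,d\mu=c_y$ gives $\mathrm{mod}_p(\mathcal{F})^p=\int_N\bigl(\widehat{J\Phi^{1/(p-1)}}\bigr)^{1-p}\,d\mu_N$, and taking $p$‑th roots finishes the proof.

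The main obstacle is the bookkeeping hidden in the first step: the test functions $\varphi=\psi\,(h\circ\Phi)$ must be chosen so that all three requirements of Theorem \ref{intfor} ($\varphi\in L^p(M)$, $\mathrm{esssup}|\varphi|<\infty$, $\mathrm{esssup}_M|\hat\varphi|<\infty$) hold at once, while the family stays rich enough to force the two leafwise identities for $\mu_N$‑a.e.\ $y$; this is handled by localizing to a countable exhaustion of $M$ by relatively compact open sets on which $J\Phi$ is bounded above and below, together with the standard trick of separating the leaves by a countable generating family so that the $\mu_N$‑null exceptional sets are uniform in $\psi$, after which everything reduces to the exponent arithmetic $q/p=1/(p-1)$, $p/q=p-1$ and the uses of \eqref{fubini}. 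One can also bypass Theorem \ref{intfor} entirely by a leafwise Hölder inequality: $1=\int_L f_0\,d\mu=\int_L\bigl(f_0(J\Phi)^{-1/p}\bigr)(J\Phi)^{1/p}\,d\mu$ gives $\int_L\tfrac{f_0^p}{J\Phi}\,d\mu\ge\bigl(\widehat{J\Phi^{1/(p-1)}}(y)\bigr)^{1-p}$ on each leaf $L=\Phi^{-1}(y)$, with equality precisely when $f_0$ is proportional to $J\Phi^{1/(p-1)}$ on $L$; integrating over $N$ and comparing $f_0$ with the admissible function $J\Phi^{1/(p-1)}/\widehat{J\Phi^{1/(p-1)}}$ then forces equality on $\mu_N$‑a.e.\ leaf by extremality of $f_0$. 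I would present whichever of the two routes sits better with the surrounding exposition.
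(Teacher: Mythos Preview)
Your first route is essentially the paper's argument, though the paper executes it more directly. Rather than factoring the test function as $\psi\,(h\circ\Phi)$ and varying $h$ and $\psi$ separately, the paper keeps a single general $\varphi$ and applies Fubini \eqref{fubini} twice to the right side of \eqref{pefe1} (once $M\to N$, once back $N\to M$) to rewrite $\int_M f_0^p\hat\varphi\,d\mu_M$ as $\int_M \varphi\,J\Phi\,\alpha\,d\mu_M$, where $\alpha(y)=\int_{\Phi^{-1}(y)}f_0^p/J\Phi$. This gives $\int_M \varphi\,(f_0^{p-1}-J\Phi\,\alpha)\,d\mu_M=0$ for all admissible $\varphi$, and then the single choice $\varphi=f_0\cdot 1_A$ on a set $A$ where $f_0^{p-1}-J\Phi\,\alpha>0$ forces $f_0^{p-1}=J\Phi\,\alpha$ a.e. That sidesteps the localization and countable-generating-family bookkeeping you correctly identify as the main obstacle in your version. (A small slip: your parenthetical invocation of $\int_L f_0=1$ at the $\psi$-step is not actually needed there; it only enters when you normalize $c_y$ afterwards.)

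Your second route, via the leafwise H\"older inequality and direct comparison with the admissible candidate $J\Phi^{1/(p-1)}\big/\widehat{J\Phi^{1/(p-1)}}$, is a genuinely different and more elementary argument: it bypasses Theorem \ref{intfor} entirely and reads off both formulas from the equality case of H\"older plus the extremality of $f_0$. The paper does not take this shortcut, since Corollary \ref{pef5} is presented precisely as an application of the integral formula \eqref{pefe1}. Your H\"older argument is self-contained and arguably cleaner if one only cares about the explicit formulas; the paper's route, on the other hand, showcases \eqref{pefe1} as the main tool of the section.
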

\begin{proof}
By Theorem \ref{intfor} and Fubini theorem \eqref{fubini}, for any $p$--integrable function $\varphi$ such that ${\rm esssup}_M|\varphi|<\infty$ and ${\rm esssup}_M|\hat{\varphi}|<\infty$ we have
\begin{gather*}
\begin{split}
\int_M f_0^{p-1}\varphi \,d\mu_M &=\int_M \left( f_0^p\int_{\Phi^{-1}(y)}\varphi \,d\mu_{\Phi^{-1}(y)} \right)\, d\mu_M \\
&=\int_{y\in N} \int_{\Phi^{-1}(y)}\left( \frac{f_0^p}{J\Phi}\int_{\Phi^{-1}(y)}\varphi\, d\mu_{\Phi^{-1}(y)} \right)\, d\mu_N \\
&=\int_{y\in N}\left( \int_{\Phi^{-1}(y)}\varphi\, d\mu_{\Phi^{-1}(y)}\right)\left( \int_{\Phi^{-1}(y)}\frac{f_0^p}{J\Phi}\,d\mu_{\Phi^{-1}(y)} \right)\, d\mu_N \\
&=\int_{y\in N}\int_{\Phi^{-1}(y)} \left( \varphi\int_{\Phi^{-1}(y)}\frac{f_0^p}{J\Phi}\,d\mu_{\Phi^{-1}(y)} \right)\, d\mu_N \\
&=\int_M \varphi J\Phi\left( \int_{\Phi^{-1}(y)}\frac{f_0^p}{J\Phi}\,d\mu_{\Phi^{-1}(y)} \right)\, d\mu_M.
\end{split}
\end{gather*}
Hence
\[
\int_M f\varphi\, d\mu_M=0,\quad f=f_0^{p-1}-J\Phi\left( \int_{\Phi^{-1}(y)}\frac{f_0^p}{J\Phi}\,d\mu_{\Phi^{-1}(y)} \right).
\]
We will show that $f=0$. Suppose $f>0$ on a set $A$ of positive and finite measure. Clearly $f<\infty$ on $A$. Let $\varphi$ be a function on $M$ equal to $f_0$ on $A$ and zero elsewhere. Then $\varphi\in L^p(M)$, $\varphi\geq 0$, ${\rm esssup}_M|\varphi|<\infty$ and ${\rm esssup}_M|\hat{\varphi}|<\infty$. Since $f_0>0$, we get
\begin{equation*}
0=\int_M f\varphi\, d\mu_M=\int_A ff_0\, d\mu_M>0.
\end{equation*}
Contradiction. Therefore $f=0$ and
\begin{equation} \label{pefpe1}
\frac{f_0^{p-1}}{J\Phi}=\int_{\Phi^{-1}(y)}\frac{f_0^p}{J\Phi}\,d\mu_{\Phi^{-1}(y)}.
\end{equation}
Denote the right hand side of above equality by $\alpha$. Then $\alpha$ is a function on $M$ constant on leaves. Moreover,
\begin{equation} \label{pefpe2}
f_0=\alpha^{\frac{1}{p-1}}(J\Phi)^{\frac{1}{p-1}}.
\end{equation}
Hence, by \eqref{pefpe1}
\[
\alpha=\alpha^{\frac{p}{p-1}}\int_{\Phi^{-1}(y)}(J\Phi)^{\frac{1}{p-1}} d\mu_{\Phi^{-1}(y)}.
\]
Therefore
\[
\alpha^{-\frac{1}{p-1}}=\int_{\Phi^{-1}(y)} (J\Phi)^{\frac{1}{p-1}} d\mu_{\Phi^{-1}(y)},
\]
so by \eqref{pefpe2} the firs condition of \eqref{pefe4} holds.  Again, by Fubini theorem \eqref{fubini} 
\begin{align*}
{\rm mod}_p(\mathcal{F})^p=&\|f_0\|_p\\
=&\int_M \left( \frac{J\Phi^\frac{1}{p-1}}{\widehat{J\Phi^\frac{1}{p-1}}}\right)^pd\mu_M\\
=&\int_N\Bigg(\frac{\int_{\Phi^{-1}(y)}J\Phi^{\frac{p}{p-1}-1}d\mu_{\Phi^{-1}(y)}}{\widehat{\left(J\Phi^\frac{1}{p-1}d\mu_L\right)^p}}\Bigg)d\mu_N\\
=&\int_N\left(\widehat{J\Phi^{\frac{1}{p-1}}} \right)^{1-p}d\mu_N.
\end{align*}
\end{proof}

By integral formula \eqref{intfor} we obtain some results concerning the geometry of a foliation $\mathcal{F}$.

\begin{corollary} \label{pef3}
Let $\mathcal{F}$ be a foliation on $M$ with closed leaves. Assume that extremal function $f_0$ for $p$--modulus of $\mathcal{F}$ exists and is $C^2$--smooth and ${\rm essinf}_Mf_0>0$. Then the mean curvature $H_{\mathcal{F}^{\bot}}$ of distribution $\mathcal{F}^{\bot}$ orthogonal to $\mathcal{F}$ is of the form
\begin{equation} \label{pefe2}
H_{\mathcal{F}^{\bot}}=(p-1)(\nabla(\ln f_0))^{\top}.
\end{equation}
In particular, $f_0$ is constant on leaves of $\mathcal{F}$ (hence $f_0=\frac{1}{\hat 1}$) if and only if distribution $\mathcal{F}^{\bot}$ is minimal.
\end{corollary}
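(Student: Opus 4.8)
The plan is to feed the integral formula of Theorem~\ref{intfor} two families of test functions built from an arbitrary smooth, compactly supported vector field $Y$ tangent to $\mathcal{F}$, and to read off \eqref{pefe2} after one integration by parts. Two elementary facts are used repeatedly. First, for such a $Y$ the Riemannian divergence splits as
\begin{equation*}
\operatorname{div}_M Y=\operatorname{div}_{L_x}Y-\langle Y,H_{\mathcal{F}^{\bot}}\rangle\qquad\text{along }L_x,
\end{equation*}
which one checks by evaluating $\operatorname{div}_M Y=\sum_i\langle\nabla_{e_i}Y,e_i\rangle$ in an adapted orthonormal frame $e_1,\dots,e_k$ spanning $T\mathcal{F}$ and $e_{k+1},\dots,e_n$ spanning $\mathcal{F}^{\bot}$, using $\langle\nabla_{e_a}Y,e_a\rangle=-\langle Y,\nabla_{e_a}e_a\rangle$ for $a>k$ (here $H_{\mathcal{F}^{\bot}}=\sum_{a>k}(\nabla_{e_a}e_a)^{\top}$, a section of $T\mathcal{F}$). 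Second, ${\rm essinf}_Mf_0>0$ together with Proposition~\ref{pef}(1) forces $\mu_L(L)\le 1/{\rm essinf}_Mf_0<\infty$ for almost every leaf $L$; in particular the leaves have uniformly bounded volume, and since they are closed, $Y|_L$ is smooth with compact support in $L$, so $\int_L\operatorname{div}_LY\,d\mu_L=0$ by the divergence theorem on the boundaryless leaf $L$.

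Combining these, $\varphi:=\operatorname{div}_MY$ is continuous with compact support (hence in $L^p(M)$ and essentially bounded), and $\hat\varphi=-\widehat{\langle Y,H_{\mathcal{F}^{\bot}}\rangle}$, which is essentially bounded by the volume estimate above; thus Theorem~\ref{intfor} applies. Integrating the left-hand side by parts (legitimate since $f_0^{p-1}Y$ is compactly supported and $f_0^{p-1}$ is $C^1$, $f_0$ being $C^2$ and bounded away from $0$) and using $f_0^{p-2}\nabla f_0=f_0^{p-1}\nabla\ln f_0$ and $\langle\nabla f_0,Y\rangle=\langle(\nabla f_0)^{\top},Y\rangle$, the formula reads $-(p-1)\int_M f_0^{p-1}\langle(\nabla\ln f_0)^{\top},Y\rangle\,d\mu_M=\int_M f_0^p\widehat{\varphi}\,d\mu_M=-\int_M f_0^p\widehat{\langle Y,H_{\mathcal{F}^{\bot}}\rangle}\,d\mu_M$. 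Now apply Theorem~\ref{intfor} a second time, to $\psi:=\langle Y,H_{\mathcal{F}^{\bot}}\rangle$ (continuous, compactly supported, with $\hat\psi$ essentially bounded by the same volume estimate), to get $\int_Mf_0^p\widehat{\langle Y,H_{\mathcal{F}^{\bot}}\rangle}\,d\mu_M=\int_Mf_0^{p-1}\langle Y,H_{\mathcal{F}^{\bot}}\rangle\,d\mu_M$. Equating and cancelling the common sign yields
\begin{equation*}
\int_M f_0^{p-1}\langle H_{\mathcal{F}^{\bot}}-(p-1)(\nabla\ln f_0)^{\top},\,Y\rangle\,d\mu_M=0
\end{equation*}
for every smooth compactly supported $Y\in\Gamma(T\mathcal{F})$.

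Since $f_0^{p-1}>0$ everywhere and $V:=H_{\mathcal{F}^{\bot}}-(p-1)(\nabla\ln f_0)^{\top}$ is a continuous section of $T\mathcal{F}$, the standard localization argument (choose $Y=\rho V$ with $\rho\ge 0$ a bump function supported where $V\neq 0$) forces $V\equiv 0$, which is \eqref{pefe2}. For the final assertion, $f_0$ is constant on leaves iff $(\nabla\ln f_0)^{\top}\equiv 0$, iff $H_{\mathcal{F}^{\bot}}\equiv 0$ by \eqref{pefe2}, iff $\mathcal{F}^{\bot}$ is minimal; and if $f_0$ is constant on leaves, Proposition~\ref{pef}(1) gives $f_0\,\mu_{L_x}(L_x)=1$, i.e.\ $f_0=1/\hat 1$. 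The delicate point is the twofold self-referential use of Theorem~\ref{intfor}: one must verify that the chosen $\varphi$ and $\psi$ genuinely meet its hypotheses, and this is precisely what the closedness of the leaves (making $Y|_L$ compactly supported, so the leafwise divergence theorem applies and $\hat\varphi$ is well defined) and the bound ${\rm essinf}_Mf_0>0$ (capping $\mu_{L_x}(L_x)$, hence ${\rm esssup}|\hat\varphi|$ and ${\rm esssup}|\hat\psi|$) provide — no compactness of $M$ or of the leaves is required.
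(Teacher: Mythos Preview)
Your argument is correct and follows the same overall strategy as the paper --- feed the integral formula \eqref{pefe1} with the divergence of a compactly supported vector field $Y$ tangent to $\mathcal{F}$, integrate by parts, and localize --- but the paper makes a more economical tactical choice. Rather than taking $\varphi=\operatorname{div}_M Y$, the paper sets $\varphi=\operatorname{div}_{\mathcal{F}}X$ (the \emph{leafwise} divergence). Since the leaves are closed, Stokes on each leaf gives $\hat\varphi=0$ directly, so the right-hand side of \eqref{pefe1} vanishes and a single application of Theorem~\ref{intfor} already yields
\[
0=\int_M f_0^{p-1}\operatorname{div}_{\mathcal{F}}X\,d\mu_M=\int_M f_0^{p-2}\skal{f_0 H_{\mathcal{F}^{\bot}}-(p-1)\nabla f_0}{X}\,d\mu_M,
\]
from which \eqref{pefe2} follows by localization. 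Your choice $\varphi=\operatorname{div}_M Y$ produces a nonzero $\hat\varphi=-\widehat{\skal{Y}{H_{\mathcal{F}^{\bot}}}}$, forcing you to invoke Theorem~\ref{intfor} a second time (with $\psi=\skal{Y}{H_{\mathcal{F}^{\bot}}}$) to convert $\int f_0^p\hat\psi$ back into $\int f_0^{p-1}\psi$; this works, but is a detour. What your write-up does add is an explicit justification that the hypotheses of Theorem~\ref{intfor} are met (the uniform volume bound $\mu_L(L)\le 1/\operatorname{essinf}f_0$), which the paper leaves implicit.
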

\begin{proof}
Let $X$ be any compactly supported vector field tangent to $\mathcal{F}$. Denote by ${\rm div}_{\mathcal{F}}X$ the divergence of $X$ with respect to the leaves of $\mathcal{F}$. Then
\[
{\rm div}_{\mathcal{F}}X={\rm div}X+\skal{H_{\mathcal{F}^{\bot}}}{X}.
\]
Since for any smooth function $f$ we have
\[
{\rm div}(fX)=f{\rm div}X+\skal{\nabla f}{X},
\]
then putting $\varphi={\rm div}_{\mathcal{F}}X$ in \eqref{pefe1} we get
\begin{gather*}
\begin{split}
0 &=\int_M \big( f_0^{p-1}({\rm div}X)+f_0^{p-1}\skal{H_{\mathcal{F}^{\bot}}}{X} \big) d\mu\\
&=\int_M \big( {\rm div}(f_0^{p-1}X)-\skal{\nabla f_0^{p-1}}{X}+f_0^{p-1}\skal{H_{\mathcal{F}^{\bot}}}{X} \big) d\mu \\
&=\int_M \big( f_0^{p-1}\skal{H_{\mathcal{F}^{\bot}}}{X}-(p-1)f_0^{p-2}\skal{\nabla f_0}{X} \big) d\mu \\
&=\int_M f_0^{p-2}\skal{f_0 H_{\mathcal{F}^{\bot}}-(p-1)\nabla f_0}{X} d\mu.
\end{split}
\end{gather*}
Taking $X=\psi(f_0 H_{\mathcal{F}^{\bot}}-(p-1)\nabla f_0)^{\top}$, where $\psi$ is a function on $M$ with compact support, we get \eqref{pefe2}. Moreover, by \eqref{pefe2} $\mathcal{F}^{\bot}$ is minimal if and only if $f_0$ is constant along leaves, hence $f_0=\frac{1}{\hat 1}$.
\end{proof}

\begin{corollary}\label{pef4}
Let $\mathcal{F}$ be a Riemannian foliation on $M$ with closed leaves. Assume that extremal function $f_0$ for $p$--modulus of $\mathcal{F}$ exists and is $C^2$--smooth and ${\rm essinf}_Mf_0>0$. Then $f_0$ is constant on leaves of $\mathcal{F}$.
\end{corollary}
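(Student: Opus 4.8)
The plan is to reduce the statement to Corollary \ref{pef3}. Since the leaves of $\mathcal{F}$ are closed, $f_0$ is $C^2$-smooth and ${\rm essinf}_Mf_0>0$, that corollary applies verbatim, and it already records the equivalence: $f_0$ is constant along the leaves (equivalently $f_0=\frac{1}{\hat 1}$) if and only if the distribution $\mathcal{F}^{\bot}$ orthogonal to $\mathcal{F}$ is minimal. So the whole task is to show that for a Riemannian foliation the distribution $\mathcal{F}^{\bot}$ is minimal.

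First I would observe that minimality of $\mathcal{F}^{\bot}$ is a pointwise condition and hence may be checked in a neighbourhood of each point $x\in M$. By the definition of a Riemannian foliation, $x$ has a neighbourhood $U$ on which $\mathcal{F}$ is defined by a Riemannian submersion $\pi\colon U\to V$; then $\mathcal{F}^{\bot}|_U=(\ker\pi_{\ast})^{\bot}=\mathcal{H}^{\pi}$ is exactly the horizontal distribution of $\pi$. Next I would invoke the classical fact that the horizontal distribution of a Riemannian submersion is minimal: choosing a local orthonormal frame $X_1,\dots,X_r$ of $\mathcal{H}^{\pi}$ consisting of basic (projectable) fields, the mean curvature of $\mathcal{H}^{\pi}$ equals $\sum_i\mathcal{V}^{\pi}(\nabla_{X_i}X_i)=\sum_i A_{X_i}X_i$, where $A$ denotes O'Neill's integrability tensor; since $A$ is skew-symmetric in its two horizontal arguments, $A_XX=0$ for every horizontal $X$, so this sum vanishes. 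Hence the mean curvature of $\mathcal{F}^{\bot}$ is zero on $U$, and, $x$ being arbitrary, on all of $M$. Corollary \ref{pef3} then gives that $f_0$ is constant on the leaves of $\mathcal{F}$.

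I do not expect a serious obstacle; the only point requiring care is the unwinding of the definition of a Riemannian foliation, i.e.\ the identification, on a distinguished chart, of $\mathcal{F}^{\bot}$ with the horizontal distribution of a Riemannian submersion (so that $\pi_{\ast}$ is a fibrewise isometry), together with the skew-symmetry of O'Neill's $A$-tensor; both are standard. As an aside, in the special case where $\mathcal{F}$ is globally given by a submersion $\Phi$, the conclusion is immediate from Corollary \ref{pef5}, since a Riemannian submersion has $J\Phi\equiv 1$ and therefore $f_0=\frac{1}{\hat 1}$.
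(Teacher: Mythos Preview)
Your proposal is correct and follows essentially the same route as the paper: reduce to Corollary \ref{pef3} and then argue that for a Riemannian foliation the orthogonal distribution $\mathcal{F}^{\bot}$ is minimal. The only difference is cosmetic: the paper invokes the stronger classical fact that $\mathcal{F}^{\bot}$ is \emph{totally geodesic} (hence minimal), whereas you prove minimality directly via O'Neill's $A$-tensor identity $A_XX=0$; the two arguments are really the same, since $A_XX=\mathcal{V}(\nabla_XX)=0$ for all horizontal $X$ is precisely the totally geodesic condition for $\mathcal{H}^\pi$.
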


\begin{proof}
Follows from the Corollary \ref{pef3} and the fact that the distribution orthogonal to Riemannian foliation is totally geodesic, hence minimal.
\end{proof}

It appears that extremal function $f_0$ defines a harmonic measure. Precisely, we say that a measure $\mu$ on a Riemannian manifold $M$ with a foliation $\mathcal{F}$ is $\mathcal{F}$--{\it harmonic} if
\begin{equation*}
\int_M \Delta_{\mathcal{F}}f\,d\mu=0
\end{equation*}
for all smooth functions $f$ on $M$, where $\Delta_{\mathcal{F}}f={\rm div}_{\mathcal{F}}(\nabla f)^{\top}$.

\begin{corollary}\label{pef6}
Let $M$ be a closed manifold, $\mathcal{F}$ a foliation on $M$ with closed leaves. Assume there exists an extremal function $f_0$ for $p$--modulus of $\mathcal{F}$ such that ${\rm essinf}_M f_0>0$. Then the measure 
\begin{equation*}
\mu=f_0^{p-1}\mu_M
\end{equation*}
is $\mathcal{F}$--harmonic.
\end{corollary}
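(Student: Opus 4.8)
The plan is to apply the integral formula \eqref{pefe1} with a suitable choice of $\varphi$, exactly as in the proof of Corollary \ref{pef3}. Given a smooth function $f$ on the closed manifold $M$, set $\varphi=\Delta_{\mathcal F}f={\rm div}_{\mathcal F}(\nabla f)^{\top}$. Since $M$ is closed, $\varphi$ is continuous, hence bounded, so ${\rm esssup}_M|\varphi|<\infty$; moreover $\varphi\in L^p(M)$. The function $\hat\varphi(x)=\int_{L_x}\varphi\,d\mu_{L_x}$ is the integral of a leafwise divergence over the (closed) leaf $L_x$, so by the divergence theorem on the closed manifold $L_x$ we get $\hat\varphi\equiv 0$; in particular ${\rm esssup}_M|\hat\varphi|<\infty$. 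Thus $\varphi$ satisfies all the hypotheses of Theorem \ref{intfor}.

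Plugging this $\varphi$ into \eqref{pefe1} gives
\begin{equation*}
\int_M f_0^{p-1}\,\Delta_{\mathcal F}f\,d\mu_M=\int_M f_0^p\,\widehat{\Delta_{\mathcal F}f}\,d\mu_M=\int_M f_0^p\cdot 0\,d\mu_M=0.
\end{equation*}
Since the measure in question is $\mu=f_0^{p-1}\mu_M$, the left-hand side is precisely $\int_M\Delta_{\mathcal F}f\,d\mu$, so $\int_M\Delta_{\mathcal F}f\,d\mu=0$ for every smooth $f$, which is the definition of $\mathcal F$--harmonicity.

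The only points needing care are the verification that $\varphi=\Delta_{\mathcal F}f$ genuinely meets the integrability conditions of Theorem \ref{intfor} and that $\widehat{\Delta_{\mathcal F}f}=0$; both follow from compactness of $M$ and of the leaves together with Stokes' theorem applied leafwise, so there is no real obstacle here. One should perhaps remark that $f_0^{p-1}$ is bounded away from $0$ and bounded above (the latter since $f_0\in L^p$ and, being extremal, one may invoke essential boundedness as used implicitly elsewhere), so that $\mu$ is a genuine finite measure comparable to $\mu_M$; this makes the statement nontrivial rather than vacuous.
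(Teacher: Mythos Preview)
Your argument is correct and is exactly the paper's proof: put $\varphi=\Delta_{\mathcal F}f$, note that $\hat\varphi=0$ by the divergence theorem on each closed leaf, and read off the conclusion from \eqref{pefe1}. The only quibble is your parenthetical claim that $f_0$ is essentially bounded above --- extremality in $L^p$ does not force this, and it is not needed anyway, since finiteness of $\mu$ already follows from $f_0^{p-1}\in L^{p/(p-1)}(M)$ together with $\mu_M(M)<\infty$.
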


\begin{proof}
Let $f$ be a smooth function on $M$. Putting $\varphi=\Delta_{\mathcal{F}}f$ in integral formula \eqref{intfor}, since $\hat\varphi=0$, we have
\begin{equation*}
0=\int_M f_0^{p-1}\Delta_{\mathcal{F}}f\,d\mu=\int_M \Delta_{\mathcal{F}}f\,d\mu.
\end{equation*}
\end{proof}

\section{Examples}

\subsection{Foliation by spheres}

Let $M=\{x\in\mathbb{R}^n:\, r_1<|x|<r_2\}$ be a ring. Consider a foliation $\mathcal{F}$ by spheres of radii $r\in(r_1,r_2)$. Then $\mathcal{F}$ is given, in a spherical coordinates, by a submersion $\Phi(r,\alpha_1,\ldots,\alpha_{n-1})=r$. Thus, an extremal function $f_0$ for a $p$--modulus of $\mathcal{F}$ is, by Corollary \ref{pef5}, equal to
\begin{equation*}
f_0(r,\alpha_1,\ldots,\alpha_{n-1})=Cr^{1-n},
\end{equation*} 
for some positive constant $C$. Hence $f_0$ is constant on the leaves of $\mathcal{F}$. Moreover, $\mathcal{F}$ is Riemannian.

\subsection{Foliation by circles on a torus in $\mathbb{R}^3$}

Consider a torus $\mathbb{T}$ in $\mathbb{R}^3$ given parametrically
\[
(\alpha,\beta)\mapsto ((R+r\cos\alpha)\cos\beta,(R+r\cos\alpha)\sin\beta,r\sin\alpha)
\]
and let $\mathcal{F}$ be a foliation on $\mathbb{T}$ given by a submersion $\Phi(\alpha,\beta)=\beta$. Then $\mathcal{F}$ is a foliation by circles of radius $r$. After some computations we get that extremal function $f_0$ for $p$--modulus of $\mathcal{F}$ is of the form
\[
f_0=C(R+r\cos\alpha)^{\frac{-1}{p-1}},
\] 
for some positive constant $C$. Hence, by Corollary \ref{pef3},
\[
H_{\mathcal{F}^{\bot}}=\frac{\sin\alpha}{R+r\cos\alpha}\frac{1}{r}\frac{\partial}{\partial\alpha},
\]
which is the curvature of orthogonal foliation $\mathcal{F}^{\bot}$ of circles of radii $R+r\cos\alpha$ in torus $\mathbb{T}$.

\subsection{Foliation by a distance function}

Let $L_0$ be a closed hypersurface in $\mathbb{R}^n$. Let $U$ be any of two connected complements of $\mathbb{R}^n\setminus L_0$. Consider a distance function from $L_0$,
\[
\rho(x)={\rm dist}(x,L_x),\quad x\in\mathbb{R}^n.
\]
Let $\mathcal{F}$ be a family of level sets of $\rho$. There is a neighborhood $M$ of $L_0$ in $U$, in which $\rho$ is smooth and $\mathcal{F}$ is a foliation on $M$, see \cite{rf}. Moreover $|\nabla\rho|=1$ on $M$ \cite{dz}. By Corollary \ref{pef5} an extremal function for $p$--modulus of $\mathcal{F}$ is of the form
\[
f_0(x)=\frac{1}{\hat 1(x)},
\]  
hence is constant on the leaves. Clearly $\mathcal{F}$ is Riemannian and the orthogonal distribution $\mathcal{F}^{\bot}$ is totally geodesic (compare Corollary \ref{pef4}).

\begin{ackn} This article is based on a part of author's PhD Thesis. The author wishes to thank her advisor Professor Antoni Pierzchalski who suggested the problem under consideration in this paper and for helpful discussions. In addition, the author wishes to thank Kamil Niedzia\l omski for helpful discussions that led to improvements of some of the theorems. 
\end{ackn}

\end{document}